 \def\X{\mathcal X} \def\C{\mathbb{C}}
  \def\H{\mathcal H} 
\def\S{\mathcal S}
    \def\e{\mathfrak e}
\def\I{{\rm 1\kern-.26em I}}
\def\Op{\mathfrak{Op}^A}
\def\1{\mathfrak{1}}
\def\0{\mathfrak{0}}
 \def\hb{\hbar}
\def\({\left(}
\def\){\right)}
\def\[{\left[}
\def\]{\right]}
\def\<{\left<}
\def\>{\right>}
\newtheorem{lemma}{Lemma}[section]
\newtheorem{corollary}[lemma]{Corollary}
\newtheorem{theorem}[lemma]{Theorem}
\newtheorem{proposition}[lemma]{Proposition}
\newtheorem{definition}[lemma]{Definition}
\numberwithin{equation}{section}
\begin{document}

\title{The Modulation Mapping for Magnetic Symbols and Operators}

\date{\today}

\author{Marius M\u antoiu \footnote{Universidad de Chile, Las Palmeras 3425, Casilla 653, Santiago Chile.
Email: Marius.Mantoiu@imar.ro} and Radu Purice \footnote
{Institute of Mathematics Simion Stoilow of the Romanian Academy, P.O. Box 1-764, Bucharest, RO-70700, Romania.
Email: Radu.Purice@imar.ro}}

\maketitle \abstract{We extend the Bargmann transform to the
magnetic pseudodifferential calculus, using gauge-covariant
families of coherent states. We also introduce modulation
mappings, a first step towards adapting modulation spaces to the
magnetic case.} \footnote{\textbf{2000 Mathematics Subject
Classification:} 35S05, 47L15, 47L65, 47L90

\textbf{Key Words:}  Magnetic field, pseudodifferential operator, phase space,
modulation mapping, crossed product algebra, coherent states, Bargmann transform}

\section*{Introduction}

Recent publications \cite{KO1,MP1,MP2,MPR1,IMP1,BB} introduced and developed a mathematical formalism for the
quantization of physical systems with variable magnetic fields. We would like now to complete the picture, sketching
the connection with coherent states, the Bargmann transform and a suitable version of the modulation mapping.

Classically, the magnetic field changes the geometry of the phase-space. This is realized by a modification of the standard
symplectic form and, consequently, of the Poisson algebra structure of the smooth functions on phase-space,
interpreted as classical observables.
Correspondingly, at the quantum level, one introduces \cite{KO1,MP1,MP3} algebras of observables defined only in terms of
the magnetic field, no choice of a vector potential being needed. The main new object is a
composition law on symbols defined by fluxes of the magnetic field through triangles.

To get self-adjoint operators and a Hilbert space theory, the resulting algebras are represented in Hilbert spaces;
this is done by choosing vector potentials defining the magnetic field. In such a way one gets essentially
a new pseudodifferential calculus (\cite{MP1,IMP1,IMP'}), seen as a functional calculus for
the family of non-commuting operators composed
of positions and magnetic momenta. When no magnetic field is present, it coincides with the Weyl quantization.
One of its main virtue is gauge-covariance: equivalent choices of vector potentials
lead to unitarily equivalent representations. We stress that this property is not shared by doing a minimal coupling
modification of the symbol in the usual Weyl calculus.

Both the intrinsic and the represented version admit $C^*$-algebraic reformulations (\cite{MPR1,MP1}). They were
useful in the spectral analysis of magnetic Schr\"odinger operators, cf. \cite{MPR2,LMR} for instance.

One of the purposes of this article is to define and study a modulation mapping in the setting of the magnetic
quantum formalism.
The main application of this magnetic modulation mapping would be inducing useful new function spaces on $\Xi$
from known function spaces on $\Xi\times\Xi$. This will be done in a future publication.

Modulation spaces are Banach function spaces introduced long ago by H. Feichtinger \cite{F1,F2}. By definition, they
involve norm estimates on a certain family of transformations of the function one studies, defined on
$\mathbb R^n$ or on a locally compact abelian group.
Modulation spaces evolved especially in connection with Time Frequency Analysis, Gabor Frames and Signal Processing
Theory. In \cite{S1}, J. Sj\"ostrand discovered the importance of one of these spaces in the theory of
pseudodifferential operators, cf. also \cite{S2}. Then the interconnection between modulation spaces and
pseudodifferential theory developed considerably, as a result both of "the Vienna school" and other researchers. We
cite, without any claim of completeness, \cite{BB1,B1,FGL,G1,G2,G3,G4,GT,HRT,T1}. Other important works are cited in these
articles.

To define modulation spaces, one introduces first a transformation (the Short Time Fourier Transform)
from functions defined on the phase space $\Xi=\mathbb R^{2N}$ to functions defined on
$\Xi\times\Xi$; we are going to indicate in Section \ref{sashaa} a magnetic analog of this transformation.
Since we are mainly interested in
its behavior with respect to symbol composition, we deviate to a certain extent from the standard approach;
so our definition could have some interest even in the non-magnetic case.
We show that this transformation is isometric between $L^2$-spaces and it transforms the
magnetic analog of the Weyl composition law into the multiplication in a typical crossed product algebra, which can also be seen
as the Kohn-Nirenberg composition for symbols defined in $\Xi\times\Xi$.

In Section \ref{finalaa} we show that our modulation mapping is an
intrinsic counter-part of the transformation sending operators from the Schr\"odinger representation to the
magnetic Bargmann representation. This one is induced by a proper choice of
a family of coherent states; for the standard case as well as for many generalizations we refer to \cite{AAGM,Fo,Ha,La3}
and to the references therein. Our result says that magnetic Weyl operators can be seen as
representations of a crossed product algebra or as usual Kohn-Nirenberg
operators defined in $\mathbb R^{2N}$. The symbols of these Kohn-Nirenberg operators are computed from the magnetic Weyl symbol
by applying the magnetic modulation mapping followed by a partial Fourier transform.

{\bf Acknowledgements:} M. M\u antoiu is partially supported by {\it N\'ucleo Cientifico ICM P07-027-F "Mathematical Theory of
Quantum and Classical Magnetic Systems"} and by Chilean Science Foundation {\it Fondecyt} under the Grant 1085162.
His interest in modulation spaces was raised by a very enjoyable visit at the Universiy of
Vienna in February, 2009. He would like to thank the members of the NuHAH group and especially Professor Hans Feichtinger for
hospitality and many useful discussions.

R. Purice acknowledges partial support from the Contract no. 2-CEx 06-11-18/2006.

\section{Recall of the magnetic Weyl calculus}

In this Section we recall the structure of the observable algebras of a particle in a variable magnetic field.
We follow the references \cite{MP1}, \cite{MP3} and \cite{IMP1}, which contain further details
and technical developments.

The physical system we consider consists in a spin-less particle moving in the euclidean space $\X:=\mathbb R^N$ under
the influence of a magnetic field. We denote by $\X^*$ the dual space of $\X$. The duality is given simply by
$\X\times\X^*\ni(x,\xi)\mapsto x\cdot\xi$. The phase space $\Xi:=T^*\X\equiv\X\times\X^*$, containing points
$X=(x,\xi)$, $Y=(y,\eta)$, $Z=(z,\zeta)$, is endowed with the standard symplectic form
\begin{equation*}\label{simp}
\sigma(X,Y)\equiv\sigma[(x,\xi),(y,\eta)]:=y\cdot \xi-x\cdot\eta.
\end{equation*}

The magnetic field is a continuous closed $2$-form $B$ on $\X$ ($dB=0$), given by matrix-component functions
$B_{jk}=-B_{kj}:\X\rightarrow\mathbb R,\ j,k=1,\dots,N.$
It defines quantum observable composition in terms of its fluxes through triangles.
If $a,b,c\in \X$, then we denote by $<a,b,c>$ the triangle in $\X$ of vertices $a,b$ and $c$ and set
$$
\Gamma^B(<a,b,c>):=\int_{<a,b,c>}B
$$
for the flux of $B$ through it (invariant integration of a $2$-form through a $2$-simplex). Then the formula
\begin{equation}\label{composition}
\left(f\#^B g\right)(X):=\pi^{-2N}\int_\Xi dY\int_\Xi dZ\,\exp\left[-2i\sigma(X-Y,X-Z)\right] \times
\end{equation}
$$
\times\exp\left[-i\Gamma^B(<x-y+z,y-z+x,z-x+y>)\right]f(Y)g(Z)
$$
defines a formal associative composition law on functions $f,g:\Xi\rightarrow \mathbb C$.

The formula (\ref{composition}) makes sense and have nice properties under various circumstances. For example, if the
components $B_{jk}$ belong to $C^\infty_{\rm{pol}}(\X)$, the class of smooth functions on $\X$ with polynomial bounds on all the
derivatives, then the Schwartz space $\S(\Xi)$ is stable under $\#^B$. The dual of $\S(\Xi)$ being denoted by
$\S^*(\Xi)$ (tempered distributions), one also has
$$
\#^B:\S(\Xi)\times\S^*(\Xi)\rightarrow\S^*(\Xi)\ \ \rm{and}\ \  \#^B:\S^*(\Xi) \times\S(\Xi)\rightarrow\S^*(\Xi).
$$
Denoting by $\mathcal M^B(\Xi)$ the largest subspace of $\S^*(\Xi)$ for which
$$
\#^B:\S(\Xi)\times\mathcal M^B(\Xi)\rightarrow\S(\Xi)\ \  \rm{and}\ \ \#^B:\mathcal
M^B(\Xi)\times\S(\Xi)\rightarrow\S(\Xi),
$$
it can be shown that $\mathcal M^B(\Xi)$ is an involutive algebra under
$\#^B$ and under complex conjugation, for which
$$
\#^B:\S^*(\Xi)\times\mathcal M^B(\Xi)\rightarrow\S^*(\Xi)\ \ \rm{and}\ \  \#^B:\mathcal M^B(\Xi)
\times\S^*(\Xi)\rightarrow\S^*(\Xi).
$$
This is quite a large class of distributions, containing all the bounded measures as well as the class
$C^\infty_{\rm{pol,u}}(\Xi)$ of all smooth functions for which all the derivatives are bounded by some polynomial
(depending on the function, but not on the order of the derivative). In addition, if we assume that all the
derivatives of the functions $B_{jk}$ are bounded, the H\"ormander classes of symbols $S^m_{\rho,\delta}(\Xi)$
composes in the usual way under $\#^B$.

Being a closed $2$-form in $\X=\mathbb R^N$, the magnetic field is exact: it can be written as $B=dA$ for some continuous
$1$-form $A$ (called {\it vector potential}). Vector potentials enter by their circulations
$$
\Gamma^A([x,y]):=\int_{[x,y]}A
$$
through segments $[x,y]:=\{tx+(1-t)y\mid t\in[0,1]\}$.
For a vector potential $A$ with $dA=B$, let us define
\begin{equation}\label{op}
\left[\Op(f)u\right](x):=(2\pi)^{-N}\int_\X\int_{\X^*}dy\,d\xi\,\exp\left[i(x-y)\cdot\xi\right]
\exp\left[-i\Gamma^A([x,y])\right]f\left(\frac{x+y}{2},\xi\right)u(y).
\end{equation}
For $A=0$ one recognizes the Weyl quantization, associating to functions or distributions on $\Xi$ linear
operators acting on function spaces on $\X$.

The space $L^2(\Xi)$ is a $^*$-algebra under $\#^B$ and complex conjugation and $\Op$
is an isomorphism of $L^2(\Xi)$ on the Hilbert space $\mathbb B_2(\H)$ of all the Hilbert-Schmidt operators on $\H=L^2(\X)$.

Suitably interpreted (by using duality arguments), $\Op$ defines a representation of the $^*$-algebra
$\mathcal M^B(\Xi)$ by linear continuous operators $:\mathcal S(\X)\rightarrow\mathcal S(\X)$, i.e.
$$
\Op(f\#^B g)=\Op(f)\Op(g)\ \  {\rm and}\ \  \Op(\overline f)=\Op(f)^*
$$
for any $f,g\in\mathcal M^B(\Xi)$. In addition, $\Op$ restricts to an isomorphism from $\mathcal S(\Xi)$ to $\mathbb
B[\mathcal S^*(\X),\mathcal S(\X)]$ and extends to an isomorphism from $\mathcal S^*(\Xi)$ to $\mathbb B[\mathcal
S(\X),\mathcal S^*(\X)]$ (we set $\mathbb B(\mathcal R,\mathcal T)$ for the family of all linear continuous operators
between the topological vector spaces $\mathcal R$ and $\mathcal T$).

An important property of (\ref{op}) is {\it gauge covariance}: if $A'=A+d\rho$  defines the same magnetic field as $A$, then
$\mathfrak Op^{A'}(f)=e^{i\rho}\Op(f)e^{-i\rho}$. Such a property would not hold for the wrong quantization,
appearing in the literature
\begin{equation*}\label{bufnita}
\left[\mathcal{O}p_{A}(f)u\right](x):=(2\pi)^{-N}\int_\X\int_{\X^*}dy\,d\xi\,
\exp\left[i(x-y)\cdot\xi\right]f\left(\frac{x+y}{2},\xi-A\left(\frac{x+y}{2}\right)\right)u(y).
\end{equation*}
To justify (\ref{op}) we define a family $(\e_X)_{X\in\Xi}$ of functions that will play an important role in the
sequel
\begin{equation}
\e_X(Z):=\exp\{-i\sigma(X,Z)\},\ \ Z\in\Xi.
\end{equation}
Actually they are elements of $C^\infty_{\rm{pol,u}}(\Xi)\subset\mathcal M^B(\Xi)$. One checks easily that
\begin{equation}\label{fac}
\e _X\,\#^B\e_Y= \Omega^B(X,Y)\,\#^B\,\e_{X+Y},
\end{equation}
where $\Omega^B:\Xi\times\Xi\rightarrow C(\mathcal{X},U(1))$ is the $2$-cocycle defined by the canonical
symplectic form and by the magnetic field $B$:
\begin{equation}\label{caf}
\Omega^B(X,Y)(z)\equiv\Omega^B(X,Y;z):=\exp\left[\frac{i}{2}\,\sigma(X,Y)\right]\omega^B(X,Y;z),
\end{equation}
with
\begin{equation}\label{fca}
\omega^B(X,Y;z):=\exp\left[-i\Gamma^B(<z,z+x,z+x+y>)\right].
\end{equation}
Suitable functions $f:\Xi\rightarrow\mathbb C$ can be expressed as
$$
f(Y)=(2\pi)^{-N}\int_\Xi dX\,(\mathfrak F f)(X)e^{-i\sigma(X,Y)}=(2\pi)^{-N}\int_\Xi dX\,(\mathfrak F f)(X)\e_X(Y),
$$
where $\mathfrak F f$ is {\it the symplectic Fourier transform of} $f$, so a good quantization should have the property
\begin{equation}\label{justi}
\Op(f)=(2\pi)^{-N}\int_\Xi dX\,(\mathfrak F f)(X)\,\Op(\e_X).
\end{equation}
Thus, the problem is to justify a choice for the operators $\mathfrak{op}^A(X):=\Op(\e_X)$ acting in $\H=L^2(\X)$.

In the presence of a magnetic field $B=dA$, a basic family of self-adjoint operators is $(Q_1,\dots,Q_N;\Pi^A_{1},
\dots,\Pi^A_{N})$, where $Q_j$ is the operator of multiplication by the coordinate function $x_j$ and
$\Pi^A_{j}:=-i\partial_j-A_j$ is the $j'$th component of the magnetic momentum. They satisfy the commutation relations
\begin{equation*}\label{comut}
i[Q_j,Q_k]=0,\ \ \ i[\Pi^A_{j},Q_k]=\delta_{jk},\ \ \ i[\Pi^A_{j},\Pi^A_{j}]=B_{jk}.
\end{equation*}
One gets (\ref{op}) as a consequence of (\ref{justi}) admitting that the quantization of the function
$X\mapsto\e_Y(X)$ should be the unitary operator
\begin{equation}\label{now}
\mathfrak{op}^A(y,\eta):=\mathfrak{Op}^A\left(\e_{(y,\eta)}\right)
=\exp\left[-i\sigma((y,\eta),(Q,\Pi^A))\right]=\exp\left[-i(Q\cdot\eta-y\cdot\Pi^A)\right],
\end{equation}
given by the explicit formula
$$
[\mathfrak{op}^A(y,\eta)u](x)=e^{-i(x+\frac{y}{2})\cdot\eta}e^{-i\Gamma^A\,([x,x+ y])}u(x+y),\ \ \ x,y\in\X,\
\eta\in\X^*,\ u\in\mathcal H.
$$
As a consequence of (\ref{fac}), one has
\begin{equation}\label{asa}
\mathfrak{op}^A(X)\,\mathfrak{op}^A(Y)=\Omega^B(X,Y;Q)\,\mathfrak{op}^A(X+Y),\ \ \ \forall X,Y\in\Xi,
\end{equation}
where $\Omega^B(X,Y;Q)$ is the operator of multiplication by the function $z\mapsto [\Omega^B(X,Y)](z)\equiv\Omega^B
(X,Y;z)$ given at (\ref{caf}) and (\ref{fca}).

The operator norm $\parallel\cdot\parallel$ on $\mathbb B\left(\mathcal H\right)$ being relevant to Quantum Mechanics, we
pull it back to symbols (the nicer, completely intrinsic approach can be found in \cite{MPR1}, \cite{MP3}).
So let us set
$$
\parallel\cdot\parallel^B:\mathcal{S}(\Xi)\rightarrow\mathbb R_+, \ \
\parallel f \parallel^B=\parallel \Op(f) \parallel.
$$

By gauge covariance, it is clear that $\parallel\cdot\parallel^B$ only depends on the magnetic field
$B$ and not on the vector potential $A$. We denote by $\mathfrak A^B(\Xi)$ the completion of
$\mathcal{S}(\Xi)$ under $\parallel\cdot\parallel^B$. It is a $C^*$-algebra that can be identified to a vector subspace of
$\mathcal{S}^*(\Xi)$ and $\Op:\mathfrak{A}^B(\Xi)\rightarrow \mathbb B(\mathcal H)$ is a faithful $^*$-representation, with
$\Op\left[\mathfrak A^B(\Xi)\right] = \mathbb K\left(\mathcal H\right)$, the $C^*$-algebra of compact operators in $\mathcal H$.

Many other useful $C^*$-algebras can be defined in this manner. An important one is $\mathfrak C^{B}(\Xi)$, defined such
that $\Op:\mathfrak C^B(\Xi)\rightarrow \mathbb B(\mathcal H)$ be an isomorphism. The "magnetic version" of the
Calderon-Vaillancourt theorem, proved in \cite{IMP1}, says that if $B_{jk}\in BC^\infty (\X),\ \ j,k = 1,\dots,N$, then
the Fr\'echet space $BC^\infty (\Xi)$ of smooth functions on $\Xi$ having bounded derivatives of any order is
continuously embedded in $\mathfrak C^B(\Xi)$. We note that $L^2(\Xi)$ and $\mathfrak A^B(\Xi)$ are $^*$-ideals
in $\mathfrak C^B(\Xi)$.

We set
\begin{equation}\label{tracas}
\Theta^B_Z(f):=\e_{-Z}\,\#^B f\,\#^B\e_{Z}
\end{equation}
for the family of {\it magnetic translations in phase-space}, introduced in \cite{IMP'} and used for a Beals-type
characterization of magnetic pseudodifferential operators by commutators.
They are automorphisms of the $^*$-algebras $L^2(\Xi),\mathfrak A^B(\Xi),\mathfrak C^B(\Xi),\mathfrak M^B(\Xi)$
and reduce, for $B=0$, to the usual translations $\left[\Theta_Z(f)\right](X):=f\left(X+Z\right)$.

We shall need an explicit form of $\Theta^B_Z$, obtained in \cite{IMP'}.
For this we define
 the following commutative {\it mixed product} (a mixture between point-wise multiplication in the first variable
 and convolution in the second):
\begin{equation}\label{star}
 (F\star g)(x,\xi)\,:=\,\int_{\mathcal{X}^\prime}d\eta\,F(x,\xi-\eta)\,g(x,\eta).
\end{equation}

\begin{proposition}\label{magn-transl}
For any 3 points $x,y,z\in \mathcal{X}$ let us define the parallelogram
\begin{equation*}
 \mathcal{P}(x;y,z)\,:=\,\{x+sy+tz\mid s\in[-1/2,1/2],\,t\in[-1,0]\},
\end{equation*}
having edges parallel to the vectors $y$ and $z$, respectively. We consider the distribution
\begin{equation}
 \Omega^B[\mathcal{P}(x;y,z)]=\exp\left\{-i\Gamma^B[\mathcal{P}(x;y,z)]\right\}=
 \exp\left\{-i\sum_{j,k=1}^Ny_jz_k\int_{-1/2}^{1/2}ds\int_{-1}^0dtB_{jk}(x+sy+tz)\right\}
\end{equation}
and its Fourier transform with respect to the second variable:
\begin{equation}
 \widetilde{\Omega^B_\mathcal{P}}[z](x,\xi)\,:=(2\pi)^{-N}\,\int_{\mathcal{X}}dy\, e^{-iy\cdot\xi}\Omega^B[\mathcal{P}
 (x;y,z)].
\end{equation}
For $Z=(z,\zeta)\in\Xi$ and $f\in L^1(\Xi)$ we have
\begin{equation}\label{aut}
\Theta^B_Z(f)\,=\,\widetilde{\Omega^B_\mathcal{P}}[z]\star\Theta_Z[f].
\end{equation}
\end{proposition}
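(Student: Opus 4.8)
The plan is to transport the identity to the operator side through $\Op$, for an arbitrary fixed vector potential $A$ with $dA=B$, and then read it back off. Since $\Op$ is a $^*$-algebra morphism and $\mathfrak{op}^A(X)=\Op(\e_X)$ by (\ref{now}), one has $\Op\big(\Theta^B_Z(f)\big)=\mathfrak{op}^A(-Z)\,\Op(f)\,\mathfrak{op}^A(Z)$, and moreover $\mathfrak{op}^A(-Z)=\mathfrak{op}^A(Z)^{-1}$: indeed by (\ref{asa}) $\mathfrak{op}^A(Z)\,\mathfrak{op}^A(-Z)=\Omega^B(Z,-Z;Q)\,\mathfrak{op}^A(0)$, with $\mathfrak{op}^A(0)=\I$ and $\Omega^B(Z,-Z;\cdot)\equiv1$ (the symplectic phase is trivial and the triangle entering $\omega^B$ is degenerate). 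So it suffices to compute the integral kernel of $\mathfrak{op}^A(-Z)\,\Op(f)\,\mathfrak{op}^A(Z)$ and to recognize it as $\Op(g)$ with $g$ the claimed right-hand side. I would carry out all the manipulations first for $f\in\S(\Xi)$, where the integrals converge absolutely, and then pass to $f\in L^1(\Xi)$ by density and continuity of the maps involved; gauge covariance of $\Op$ guarantees the outcome depends on $B$ alone.

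For the kernel: by (\ref{op}) the Schwartz kernel of $\Op(f)$ is $(2\pi)^{-N}\int_{\X^*}d\xi\,e^{i(x-y)\cdot\xi}\,e^{-i\Gamma^A([x,y])}\,f\big(\frac{x+y}{2},\xi\big)$, while $\mathfrak{op}^A(\pm Z)$ act by the explicit formulas recalled after (\ref{now}), which up to phases shift the spatial argument by $\pm z$. Composing the three operators, performing the change of variable in $y$ that restores the argument of $u$, and collecting the phases, the position--momentum exponentials recombine and, after an obvious substitution in the $\xi$-variable, reproduce the standard factor $e^{i(x-y)\cdot\xi}$, while $f$ gets replaced by its translate $\Theta_Z f$, evaluated at $\big(\frac{x+y}{2},\xi\big)$.

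The magnetic content is one application of Stokes' theorem. The three circulations of $A$ generated by the three operator factors, together with the reference circulation $\Gamma^A([x,y])$ that one isolates in order to match (\ref{op}), assemble into the circulation of $A$ around the closed polygonal loop $x\to x-z\to y-z\to y\to x$. Since $dA=B$, this equals the flux of $B$ through the parallelogram spanned by that loop, which is exactly the parallelogram $\mathcal{P}$ of the statement with first slot $\frac{x+y}{2}$, second slot $\pm(x-y)$ and third slot $z$ — at this point one must check the orientation so that the outcome matches the sign built into $\Gamma^B[\mathcal{P}(a;b,c)]=\sum_{j,k}b_jc_k\int_{-1/2}^{1/2}\!\!\int_{-1}^{0}B_{jk}$. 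The kernel has then become $(2\pi)^{-N}\int_{\X^*}d\xi\,e^{i(x-y)\cdot\xi}\,e^{-i\Gamma^A([x,y])}\,\Omega^B[\mathcal{P}]\,(\Theta_Z f)\big(\frac{x+y}{2},\xi\big)$.

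The last step, and the only genuinely non-routine one, is to read off the symbol: the kernel above is not yet literally of the form (\ref{op}), because $\Omega^B[\mathcal{P}]$ still depends on $x-y$ and not only on $\frac{x+y}{2}$. Viewing the $\xi$-integral as a partial inverse Fourier transform in the variable $r:=x-y$ and imposing equality with the kernel of $\Op(g)$ forces $g\big(\frac{x+y}{2},\cdot\big)$ to be the partial Fourier transform in $r$ of the product of $r\mapsto\Omega^B[\mathcal{P}(\frac{x+y}{2};r,z)]$ with a partial inverse Fourier transform of $\Theta_Z f$. Since the Fourier transform turns this product into a convolution in the momentum variable, and since $\widetilde{\Omega^B_\mathcal{P}}[z]$ is by definition the partial Fourier transform of $\Omega^B[\mathcal{P}(\cdot;\cdot,z)]$, this convolution is precisely the mixed product (\ref{star}), all powers of $2\pi$ cancelling; hence $g=\widetilde{\Omega^B_\mathcal{P}}[z]\star\Theta_Z[f]$, and $\Op^{-1}$ gives (\ref{aut}). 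A purely symbolic alternative expands $f$ over the $\e_W$ and uses (\ref{fac}) twice to write $\e_{-Z}\#^B\e_W\#^B\e_Z$ as a $\#^B$-product of two position-only cocycle factors with $\e_W$; it leads to the same computation with more bookkeeping. The main difficulty will indeed be this bookkeeping — pinning down the oriented parallelogram exactly and tracking the Fourier rearrangement that manufactures $\star$ — together with the density argument reaching all of $L^1(\Xi)$.
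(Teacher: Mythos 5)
The paper does not actually prove Proposition \ref{magn-transl}; it quotes it from the reference \cite{IMP'} ("We shall need an explicit form of $\Theta^B_Z$, obtained in \cite{IMP'}"), so there is no in-paper argument to compare against. Judged on its own terms, your strategy is correct and essentially complete as an outline: since $\Op$ is injective on $\S^*(\Xi)$ and intertwines $\#^B$ with operator composition, the identity is equivalent to computing the Schwartz kernel of $\mathfrak{op}^A(-Z)\,\Op(f)\,\mathfrak{op}^A(Z)$; the three circulations of $A$ combine with the reference circulation $\Gamma^A([x,y])$ into the circulation around the loop $x\to x-z\to y-z\to y\to x$, which Stokes converts into the flux of $B$ through $\mathcal P\big(\tfrac{x+y}{2};\pm(x-y),z\big)$; and the remaining factor, depending on $x-y$ only through this flux, is turned by the partial Fourier transform in $x-y$ into exactly the $\xi$-convolution defining $\star$, with the $(2\pi)^{-N}$ in the definition of $\widetilde{\Omega^B_{\mathcal P}}[z]$ accounting for all normalizations. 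I verified that this chain of computations closes.

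Two points deserve explicit attention rather than being deferred as "bookkeeping". First, the translation convention: conjugation $\e_{-Z}\#^B(\cdot)\#^B\e_Z$ shifts the symbol by $-Z$ (for $B=0$ one gets $f(\cdot-Z)$, since $\mathfrak{op}(-Z)\,Q\,\mathfrak{op}(Z)=Q-z$), so for \eqref{aut} to hold one must read $[\Theta_Z(f)](X)=f(X-Z)$ — which is indeed how $\Theta_Z$ is used in the proof of Lemma \ref{basica}(c), although it contradicts the convention $f(X+Z)$ displayed earlier in the paper; your write-up leaves this sign unresolved, and it interacts with the orientation of $\mathcal P$ (the loop above traverses $\partial\mathcal P$ with the orientation corresponding to second slot $y-x$, not $x-y$). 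Second, the passage from $\S(\Xi)$ to $L^1(\Xi)$: $\widetilde{\Omega^B_{\mathcal P}}[z]$ is in general only a distribution in $\xi$, and $\widetilde{\Omega^B_{\mathcal P}}[z]\star(\cdot)$ is the conjugate by a partial Fourier transform of multiplication by the unimodular function $\Omega^B[\mathcal P]$; this does not map $L^1(\Xi)$ into $L^1(\Xi)$, so the "density and continuity" step should be phrased as an identity in $\S^*(\Xi)$ (both sides being separately continuous from $L^1(\Xi)$ into $\S^*(\Xi)$), not as an $L^1$ estimate. With these two points pinned down, your argument is a valid proof.
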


More generally, we set
\begin{equation}\label{tracass}
\Theta^B_{Y,Z}(f):=\e_{-Y}\,\#^B f\,\#^B\e_{Y-Z},
\end{equation}
which makes sense for every $f\in\mathcal M^B(\Xi)$. We have $\Theta^B_{Y,0}=\Theta^B_{Y}$.

\section{Modulation mappings}\label{sashaa}

We only assume that $B$ has continuous components.  On functions defined on $\Xi$ or on $\Xi\times\Xi$, respectively, we
will use the "real" scalar products
$$
\<f,g\>:=\int_\Xi dXf(X)g(X),\ \ \ \ \ \<\!\<F,G\>\!\>:=\int_\Xi\int_\Xi dXdYF(X,Y)G(X,Y).
$$

\begin{definition}\label{justify}
{\it The magnetic modulation mapping $M^B_{h}:L^2(\Xi)\rightarrow L^2(\Xi\times\Xi)$ associated to $h\in L^2(\Xi)$} is
\begin{equation}\label{begin}
\left[M^B_{h}(f)\right](X,Y):=\<\e_{-X}\#^B f\#^B\e_{X-Y},h\>=\langle\Theta^B_{X,Y}[f],h\rangle.
\end{equation}

\end{definition}

\noindent
{\bf Remark.} By using the formula (\ref{croco}), one can also write
$$
\left[M^B_{h}(f)\right](X,Y)=\<h\#^B\e_{-X}\#^B f,\e_{X-Y}\>=\[\mathfrak F\(h\#^B\e_{-X}\#^B f\)\](X-Y).
$$
In the case $B=0$ one gets
\begin{equation}\label{amea}
\[M_{h}(f)\](X,Y)=\<\e_{-X}\#f\#\e_{X-Y},h\>=\exp\left[\frac{i}{2}\,\sigma(X,Y)\right]\<\Theta_X(f)\#\e_{-Y},h\>,
\end{equation}
which is different from the standard choice (the Short Time Fourier Transform)
\begin{equation*}\label{alor}
\[\mathcal V_{h}(f)\](X,Y)=\<\e_{-Y}\Theta_X(f),h\>.
\end{equation*}
The main difference is the replacement of point-wise multiplication by the Weyl product, and this might be an advantage
for studying the algebraic properties of the modulation mapping.

\medskip
The next {\it orthogonal relations} justify formally Definition \ref{justify}

\begin{theorem}\label{incep}
For $f,g,h,k\in L^2(\Xi)$, one has
\begin{equation}\label{ortog}
\<\!\<\overline{M^B_{h}(f)},M^B_{k}(g)\>\!\>=\<\overline h,k\>\<\overline f,g\>.
\end{equation}
In particular, $\frac{1}{\parallel h\parallel}M^B_{h}$ is an isometry.
\end{theorem}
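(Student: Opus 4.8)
The plan is to perform the two integrations, first over $Y$ and then over $X$, starting from the alternative form of the modulation mapping recorded in the Remark after Definition~\ref{justify}:
$$[M^B_{h}(f)](X,Y)=[\mathfrak F(h\#^B\e_{-X}\#^B f)](X-Y).$$
I would first take $f,g,h,k\in\mathcal S(\Xi)$; then $h\#^B\e_{-X}\#^B f\in\mathcal S(\Xi)$, since $\e_{-X}\in C^\infty_{\mathrm{pol,u}}(\Xi)\subset\mathcal M^B(\Xi)$ and $\#^B$ maps $\mathcal S(\Xi)\times\mathcal M^B(\Xi)$ into $\mathcal S(\Xi)$. The absolute convergence of the integrals below is part of the content of the last paragraph, so Fubini will apply, and the general statement then follows by density once the isometry has been established on $\mathcal S(\Xi)$.

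First, fix $X$ and integrate over $Y$. With $a_X:=h\#^B\e_{-X}\#^B f$ and $b_X:=k\#^B\e_{-X}\#^B g$, the change of variable $W=X-Y$ gives
$$\int_\Xi dY\;\overline{[M^B_{h}(f)](X,Y)}\,[M^B_{k}(g)](X,Y)=\int_\Xi dW\;\overline{(\mathfrak F a_X)(W)}\,(\mathfrak F b_X)(W)=\langle\overline{a_X},b_X\rangle,$$
the last equality being Plancherel for the symplectic Fourier transform. Integrating now over $X$,
$$\langle\!\langle\overline{M^B_{h}(f)},M^B_{k}(g)\rangle\!\rangle=\int_\Xi dX\;\big\langle\,\overline{h\#^B\e_{-X}\#^B f}\,,\,k\#^B\e_{-X}\#^B g\,\big\rangle,$$
and the whole problem is reduced to this last integral.

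To evaluate it, transport the integrand to operators on $\mathcal H$ through $\Op$. Using that $\Op$ turns $\#^B$ into operator composition and complex conjugation into the adjoint, that it sends $\e_{\pm X}$ to $\mathfrak{op}^A(\pm X)=:W_{\pm X}$ with $W_{-X}=W_X^{-1}=W_X^{*}$ (immediate from (\ref{asa}) and unitarity, since $\Omega^B(X,-X;\cdot)\equiv 1$ because the relevant triangle is degenerate), and that it identifies $L^2(\Xi)$ with the Hilbert space $\mathbb B_2(\mathcal H)$ so that $\langle\overline a,b\rangle$ becomes the Hilbert--Schmidt pairing $\mathrm{Tr}[\Op(a)^{*}\Op(b)]$, the integrand equals
$$\mathrm{Tr}\big[\Op(f)^{*}\,W_X\,\Op(h)^{*}\Op(k)\,W_X^{-1}\,\Op(g)\big]=\mathrm{Tr}\big[W_X\,S\,W_X^{-1}\,R\big],$$
where $S:=\Op(h)^{*}\Op(k)$ and $R:=\Op(g)\Op(f)^{*}$ are trace-class (each a product of two Hilbert--Schmidt operators) and the last step is cyclicity of the trace. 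Since $\mathrm{Tr}(S)=\langle\overline h,k\rangle$ and $\mathrm{Tr}(R)=\langle\overline f,g\rangle$, the theorem reduces to the ``resolution of identity''
$$\int_\Xi dX\;\mathrm{Tr}\big[W_X\,S\,W_X^{-1}\,R\big]=\mathrm{Tr}(S)\,\mathrm{Tr}(R),\qquad S,R\ \text{trace-class.}$$

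This last identity is the heart of the matter and the step I expect to be the main obstacle; it is a magnetic version of the orthogonality relations for the square-integrable projective representation $X\mapsto W_X$ of $\Xi$. I would prove it by bilinearity together with a density argument reducing to rank-one operators $S=|u\rangle\langle v|$, $R=|p\rangle\langle q|$ with $u,v,p,q\in\mathcal S(\mathcal X)$; then $W_X S W_X^{-1}=|W_X u\rangle\langle W_X v|$ and $\mathrm{Tr}[W_X S W_X^{-1}R]=(W_X v,p)\,(q,W_X u)$, and inserting $[\mathfrak{op}^A(y,\eta)u](x)=e^{-i(x+y/2)\cdot\eta}e^{-i\Gamma^A([x,x+y])}u(x+y)$ one obtains an absolutely convergent integral over $(y,\eta,x,x')$ in which the $\eta$-integration produces $(2\pi)^N\delta(x-x')$; the two magnetic phases $e^{\pm i\Gamma^A([x,x+y])}$ then cancel identically, and after the shift $x+y\mapsto y$ the remainder factorizes as $\big(\int_{\mathcal X}p\,\overline q\,\big)\big(\int_{\mathcal X}u\,\overline v\,\big)=\mathrm{Tr}(R)\,\mathrm{Tr}(S)$. (Equivalently, this square-integrability is essentially what underlies the unitarity of $\Op:L^2(\Xi)\to\mathbb B_2(\mathcal H)$ recalled above, so one may instead quote it.) Reassembling the three previous paragraphs yields (\ref{ortog}); the bookkeeping of the $(2\pi)$-factors coming from $\mathfrak F$, from $\Op$ and from the two pairings is the only slightly delicate point, and one checks that they combine to the clean form (\ref{ortog}), for instance by inspecting the case $B=0$. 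Finally, the closing assertion follows by setting $k=h$, $g=f$ in (\ref{ortog}) and observing that $\langle\!\langle\overline{M^B_{h}(f)},M^B_{h}(f)\rangle\!\rangle=\|M^B_{h}(f)\|^2_{L^2(\Xi\times\Xi)}$ while the right-hand side equals $\|h\|^2\|f\|^2$; the resulting bound extends $M^B_h/\|h\|$, and with it the identity (\ref{ortog}), from $\mathcal S(\Xi)$ to all of $L^2(\Xi)$ by continuity.
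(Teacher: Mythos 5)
Your proof is correct in outline, but it takes a genuinely different route from the paper's for the key step. The paper never leaves the symbol side: it uses the cyclicity $\langle f_1\#^Bf_2,f_3\rangle=\langle f_2,f_3\#^Bf_1\rangle$ together with the weak resolution of the identity $\int_\Xi dZ\,|\e_Z\rangle\langle\e_{-Z}|=1$ to collapse the $Y$-integration (this plays exactly the role of your Plancherel-in-$W=X-Y$ step), reducing the left-hand side to $\int_\Xi dX\,\langle\Theta^B_X(\overline h\#^Bk),g\#^B\overline f\rangle$, and then concludes with the identity $\int\!\!\int dYdZ\,[\Theta^B_Z(f)](Y)\,g(Y)=\int f\cdot\int g$, which is proved from the explicit form of the magnetic phase-space translations in Proposition \ref{magn-transl} and Fourier inversion (Lemma \ref{basica}(c)). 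You instead transport everything through $\Op$ and reduce the theorem to the square-integrability/orthogonality relations $\int_\Xi dX\,\mathrm{Tr}[W_XSW_X^{-1}R]=\mathrm{Tr}(S)\,\mathrm{Tr}(R)$ for the magnetic Weyl system, verified on rank-one operators, where the two circulations $e^{\pm i\Gamma^A([x,x+y])}$ cancel after the $\eta$-integration. Both arguments are sound. Yours makes the gauge-independence of the answer visible at a glance and ties the theorem to the classical orthogonality relations for square-integrable (projective) representations; the paper's stays intrinsic --- no vector potential is ever chosen --- and works entirely inside the $\#^B$-algebra, which is more in the spirit of the article. Two minor caveats, neither fatal: the $(2\pi)^N$ bookkeeping you defer really does need to be pinned down (note, though, that the paper's own proof of Lemma \ref{basica}(b) silently drops a factor $(2\pi)^{2N}$ in its last equality, so the normalisation conventions are loose on both sides); and your reduction to $\mathcal S(\Xi)$ invokes the stability $\#^B:\mathcal S(\Xi)\times\mathcal M^B(\Xi)\to\mathcal S(\Xi)$, which is stated under the hypothesis $B_{jk}\in C^\infty_{\rm pol}(\X)$, whereas Section \ref{sashaa} assumes only continuity of $B$ --- the paper faces the same difficulty and waves at it via ``approximation arguments'' in Lemma \ref{basica}(a); since $\e_X\#^Bf$ corresponds to $W_X\Op(f)\in\mathbb B_2(\H)$, one can in fact run your argument entirely in $L^2(\Xi)$ and avoid Schwartz functions altogether.
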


The proof will use several properties which are gathered in the next Lemma:

\begin{lemma}\label{basica}
(a) For any $f_1,f_2,f_3\in L^2(\Xi)$
\begin{equation}\label{croco}
\<f_1\#^B f_2,f_3\>=\<f_1,f_2\#^B f_3\>=\<f_2,f_3\#^B f_1\>.
\end{equation}
The same is true if one of the three functions is replaced by $\mathfrak e_X$, for some $X\in\Xi$.

\medskip
(b) One has in weak sense
\begin{equation}\label{pleasca}
\int_\Xi dZ\,|\e_Z\rangle\langle\e_{-Z}|=1.
\end{equation}

(c) For any $f,g\in L^1(\Xi)$
\begin{equation}\label{ceoare}
\int_\Xi\int_\Xi dYdZ\left[\Theta^B_Z(f)\right](Y)\,g(Y)=\int_\Xi dZ f(Z)\int_\Xi dY g(Y).
\end{equation}
\end{lemma}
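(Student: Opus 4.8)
The three assertions are independent and I would prove them in the stated order.

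\medskip\noindent\emph{Part (a).} The plan is to reduce (\ref{croco}) to the associativity of $\#^B$ (recalled in Section~1) together with the \emph{trace identity}
$$
\int_\Xi(f\#^B g)(X)\,dX=\langle f,g\rangle,\qquad f,g\in\S(\Xi).
$$
This identity is a direct computation on (\ref{composition}): with $X=(x,\xi)$, the only $\xi$-dependence of the integrand comes from $e^{-2i\sigma(X-Y,X-Z)}$, whose $\xi$-factor is $e^{-2i(y-z)\cdot\xi}$, so integration in $\xi$ yields $\pi^{N}\delta(y-z)$; but for $y=z$ the triangle $\langle x-y+z,\,x+y-z,\,-x+y+z\rangle$ has two coinciding vertices, hence zero $B$-flux, so $\exp[-i\Gamma^B(\cdots)]=1$; the remaining integration in $x$ forces $\eta=\zeta$ in the same way, and the prefactor $\pi^{-2N}$ exactly absorbs the two Fourier--delta normalizations, leaving $\int_\Xi f\,g$. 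Granting this and the elementary fact $\int_\Xi(u\#^B v)=\int_\Xi u\,v=\int_\Xi v\,u=\int_\Xi(v\#^B u)$, the identities in (\ref{croco}) are formal consequences, e.g. $\langle f_1\#^B f_2,f_3\rangle=\int_\Xi\big((f_1\#^B f_2)\#^B f_3\big)=\int_\Xi\big(f_1\#^B(f_2\#^B f_3)\big)=\langle f_1,f_2\#^B f_3\rangle$, with the cyclic equality coming from the symmetry of $\int_\Xi(a\#^B b)$ in $a,b$. I would then pass from $\S(\Xi)$ to $L^2(\Xi)$ by density, using that $\#^B\colon L^2(\Xi)\times L^2(\Xi)\to L^2(\Xi)$ is bounded (a product of Hilbert--Schmidt operators is Hilbert--Schmidt and $\Op$ is an isomorphism onto $\mathbb B_2(\H)$), so that both sides of (\ref{croco}) are continuous trilinear forms. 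An equivalent shorter route: the trace identity says $\langle f,g\rangle$ is a fixed multiple of $\mathrm{Tr}\big(\Op(f)\Op(g)\big)$, so (\ref{croco}) is just cyclicity of the operator trace; the variant with one factor replaced by $\e_X$ is cyclicity of $\mathrm{Tr}\big(\mathfrak{op}^A(X)\Op(f_2)\Op(f_3)\big)$, where $\Op(f_2)\Op(f_3)$ is trace class for $f_2,f_3\in L^2(\Xi)$ and $\langle\e_X,f_2\#^B f_3\rangle$ is to be read as $(2\pi)^N(\F(f_2\#^B f_3))(X)$.

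\medskip\noindent\emph{Part (b).} I would write out both pairings as symplectic Fourier transforms. Since $\e_Z(X)=e^{-i\sigma(Z,X)}$ we have $\langle g,\e_Z\rangle=\int_\Xi g(X)e^{-i\sigma(Z,X)}\,dX$ and $\langle\e_{-Z},h\rangle=\int_\Xi h(W)e^{i\sigma(Z,W)}\,dW$, so for $g,h$ in a convenient dense subspace Fubini gives
$$
\int_\Xi dZ\,\langle g,\e_Z\rangle\langle\e_{-Z},h\rangle=\int_\Xi dX\int_\Xi dW\,g(X)h(W)\int_\Xi dZ\,e^{i\sigma(Z,\,W-X)}=\int_\Xi g(X)h(X)\,dX=\langle g,h\rangle,
$$
the inner $Z$-integral being the Dirac mass at $W=X$ (with the normalization of $dZ$ making $\int_\Xi e^{i\sigma(Z,V)}\,dZ=\delta(V)$); equivalently this is just the Fourier inversion formula for $\F$. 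The weak statement (\ref{pleasca}) is precisely this equality read for all $g$.

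\medskip\noindent\emph{Part (c).} Here the plan is to combine the explicit form (\ref{aut}) of $\Theta^B_Z$ from Proposition~\ref{magn-transl} with the fact that $\widetilde{\Omega^B_\mathcal{P}}[z]$ has total mass $1$ in its second variable. Indeed, Fourier inversion in the definition of $\widetilde{\Omega^B_\mathcal{P}}[z]$ gives $\int_{\X^*}\widetilde{\Omega^B_\mathcal{P}}[z](x,\xi)\,d\xi=\Omega^B[\mathcal{P}(x;0,z)]$, and $\mathcal{P}(x;0,z)$ is the segment $\{x+tz:t\in[-1,0]\}$, so its $B$-flux vanishes and this integral is $1$ for all $x,z$. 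Since the mixed product (\ref{star}) satisfies $\int_{\X^*}(F\star g)(x,\xi)\,d\xi=\big(\int_{\X^*}F(x,\cdot)\big)\big(\int_{\X^*}g(x,\cdot)\big)$, it follows from (\ref{aut}) that, for every $Y\in\Xi$,
$$
\int_\Xi dZ\,[\Theta^B_Z(f)](Y)=\int_\Xi f(Z)\,dZ,
$$
independently of $Y$: writing $Y=(y,\eta)$ and $Z=(z,\zeta)$, one first integrates out $\zeta$, which replaces $f(y+z,\omega+\zeta)$ by the $\zeta$-independent quantity $\int_{\X^*}f(y+z,\zeta')\,d\zeta'$; the remaining convolution in $\omega$ against $\widetilde{\Omega^B_\mathcal{P}}[z]$ then contributes the factor $1$ just obtained; and finally $\int_\X dz\int_{\X^*}f(y+z,\zeta')\,d\zeta'=\int_\Xi f(Z)\,dZ$. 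Multiplying by $g(Y)$ and integrating in $Y$ gives (\ref{ceoare}). (Alternatively, (\ref{croco}) yields $\langle\Theta^B_Z(f),g\rangle=\langle f,\Theta^B_{-Z}(g)\rangle$, so $\int_\Xi dZ\,\langle\Theta^B_Z(f),g\rangle=\big\langle f,\int_\Xi dZ\,\Theta^B_Z(g)\big\rangle$, and one is reduced to the same statement that $Y\mapsto\int_\Xi dZ\,[\Theta^B_Z(g)](Y)$ is the constant $\int_\Xi g$.)

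\medskip\noindent The only real work beyond these computations is justifying the interchanges of integration and the distributional identities: in (a) one proves (\ref{croco}) first on $\S(\Xi)$ and extends by continuity; in (b) one works on a dense subspace of $L^2(\Xi)$; and in (c), where $B$ is only assumed continuous so that $\widetilde{\Omega^B_\mathcal{P}}$ is genuinely a distribution, one first treats $f$ (and $g$) in $\S(\Xi)$ — or uses the (a)+(b) reduction above — and then passes to $L^1(\Xi)$ by density. This last step is the one I expect to require the most care.
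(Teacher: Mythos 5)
Your proof is correct and follows essentially the same route as the paper: parts (b) and (c) reproduce the paper's own computations (symplectic Fourier inversion for (b); Proposition \ref{magn-transl} together with the vanishing of the flux through the degenerate parallelogram $\mathcal P(x;0,z)$ for (c)). For part (a) the paper merely cites \cite{MP1} and invokes an approximation argument, so your self-contained derivation from the trace identity $\int_\Xi(f\#^B g)=\langle f,g\rangle$ combined with associativity and a density/continuity extension to $L^2(\Xi)$ is an acceptable substitute for that reference.
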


\begin{proof}
(a) We proved in \cite{MP1} the case $f_1,f_2,f_3\in\S(\Xi)$. For the other cases one rewrites the proof more carefully or
uses an approximation argument.

\medskip
(b) One has for $f,g\in L^2(\Xi)$
$$
\int_\Xi dZ\,\langle f,\e_Z\rangle\langle\e_{-Z},g\rangle=
(2\pi)^{2N}\int_\Xi dZ\,(\mathfrak F f)(Z)\,(\mathfrak F g)(-Z)=
$$
$$
=(2\pi)^{2N}\int_\Xi dZ\,(\mathfrak F f)(Z)\,\overline{(\mathfrak F \overline g)(Z)}=
\int_\Xi dZ\,f(Z)\,g(Z)=\langle f,g\rangle.
$$

(c) We apply Proposition \ref{magn-transl} and use the Fourier inversion formula:
$$
\int_\Xi\int_\Xi dYdZ\left[\Theta^B_Z(f)\right](Y)\,g(Y)=
\int_\Xi\int_\Xi dYdZ\left[\left(\widetilde\Omega^B_\mathcal P[z]\right)\star \Theta_Z(f)\right](Y)\,g(Y)=
$$
$$
=\int_\Xi\int_\Xi dYdZ\int_{\X^*}d\xi\left(\widetilde\Omega^B_\mathcal P[z]\right)(y,\xi)\,
\left[\Theta_Z(f)\right](y,\eta-\xi)\,g(Y)=
$$
$$
=(2\pi)^{-N}\int_{\X}dy\int_{\X}dz\int_{\X^*}d\eta\int_{\X^*}d\zeta\int_{\X}dx\int_{\X^*}d\xi\,e^{-ix\cdot\xi}\,
\Omega^B\left[\mathcal P(y;x,z)\right]\,f(y-z,\eta-\xi-\zeta)\,g(y,\eta)=
$$
$$
=(2\pi)^{-N}\int_{\X}dy\int_{\X}dz\int_{\X^*}d\eta\int_{\X^*}d\zeta\int_{\X}dx\int_{\X^*}d\nu\,e^{ix\cdot\zeta}
e^{ix\cdot(\nu-\eta)}\,\Omega^B\left[\mathcal P(y;x,z)\right]\,f(y-z,\nu)\,g(y,\eta)=
$$
$$
=\int_{\X}dy\int_{\X}dz\int_{\X^*}d\eta\int_{\X^*}d\nu\,\Omega^B\left[\mathcal P(y;0,z)\right]
\,f(y-z,\nu)\,g(y,\eta)=\int_\Xi dZ f(Z)\int_\Xi dY g(Y).
$$
\end{proof}

\begin{proof}{\it of the Theorem.}
Using Lemma \ref{basica} we compute
$$
\<\!\<\overline{M^B_{h}(f)},M^B_{k}(g)\>\!\>=\int_\Xi\int_\Xi dXdY\<\overline{\e_{-X}\#^Bf\#^B\e_{X-Y}},\overline h\>
\<\e_{-X}\#^Bg\#^B\e_{X-Y},k\>=
$$
$$
=\int_\Xi\int_\Xi dXdY\<\e_{Y-X}\#^B\overline{f}\#^B\e_{X},\overline h\>\<\e_{-X}\#^Bg\#^B\e_{X-Y},k\>=
$$
$$
=\int_\Xi\int_\Xi dXdY \< \overline{f}\#^B\e_{X}\#^B\overline h,\e_{Y-X}\>\<\e_{X-Y},k\#^B\e_{-X}\#^Bg\>=
$$
$$
=\int_\Xi dX\<\overline{f}\#^B\e_X\#^B \overline h,k\#^B\e_{-X}\#^B g\>=
\int_\Xi dX\<\e_X\#^B \overline h\#^B k,\e_{-X}\#^B g\#^B\overline f\>=
$$
$$
=\int_\Xi dX\<\Theta^B_X(\overline h\#^B k),g\#^B\overline f\>=\int_\Xi dX\,(\overline h\#^B k)(X)
\int_\Xi dY\,(g\#^B\overline f)(Y)=\<\overline h,k\>\<\overline f,g\>.
$$
\end{proof}

\begin{corollary}\label{inver}
We have {\rm the inversion formula}:
\begin{equation}\label{invo}
\(M^B_k\)^*M^B_h=\<h,\overline k\>{\rm id}.
\end{equation}
\end{corollary}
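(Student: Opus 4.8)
The plan is to obtain (\ref{invo}) as a direct consequence of the orthogonality relations in Theorem \ref{incep}, by unwinding the definition of the Hilbert-space adjoint. First I would note that $M^B_h$ and $M^B_k$ are bounded operators between the relevant $L^2$-spaces; this is already contained in Theorem \ref{incep}, since it shows $\frac{1}{\parallel h\parallel}M^B_h$ is isometric. Hence $\(M^B_k\)^*:L^2(\Xi\times\Xi)\to L^2(\Xi)$ is well defined and $\(M^B_k\)^*M^B_h$ is a bounded operator on $L^2(\Xi)$, so it suffices to establish the claimed identity weakly, testing it against arbitrary $f,g\in L^2(\Xi)$.

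The genuine (sesquilinear) scalar products are related to the bilinear pairings introduced in Section \ref{sashaa} by $(u,v)_{L^2(\Xi)}=\langle\overline u,v\rangle$ and $(F,G)_{L^2(\Xi\times\Xi)}=\langle\!\langle\overline F,G\rangle\!\rangle$. Using the defining property of the adjoint and then moving a complex conjugation into the first slot one gets
$$
\big(g,\(M^B_k\)^*M^B_h f\big)_{L^2(\Xi)}=\big(M^B_k g,M^B_h f\big)_{L^2(\Xi\times\Xi)}=\langle\!\langle\overline{M^B_k(g)},M^B_h(f)\rangle\!\rangle .
$$
Next I would apply Theorem \ref{incep} with the roles of $(f,h)$ and $(g,k)$ interchanged; since that statement is universally quantified, it immediately evaluates the right-hand side as $\langle\overline k,h\rangle\,\langle\overline g,f\rangle$. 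Translating back, $\langle\overline g,f\rangle=\big(g,f\big)_{L^2(\Xi)}$ and, by symmetry of the bilinear pairing on $\Xi$, $\langle\overline k,h\rangle=\langle h,\overline k\rangle$. Therefore $\big(g,\(M^B_k\)^*M^B_h f\big)_{L^2(\Xi)}=\langle h,\overline k\rangle\,\big(g,f\big)_{L^2(\Xi)}$ for every $g\in L^2(\Xi)$, and since $g$ is arbitrary this gives $\(M^B_k\)^*M^B_h f=\langle h,\overline k\rangle\,f$, which is (\ref{invo}).

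The only point that needs care is the bookkeeping with the various pairings in play: the bilinear $\langle\cdot,\cdot\rangle$ on $L^2(\Xi)$, the bilinear $\langle\!\langle\cdot,\cdot\rangle\!\rangle$ on $L^2(\Xi\times\Xi)$, and the Hermitian scalar products used to define the adjoint. One must track each complex conjugation so that the scalar produced by Theorem \ref{incep} is exactly $\langle h,\overline k\rangle$, rather than $\langle\overline h,k\rangle$ or a complex conjugate of it. There is no analytic obstacle: boundedness is already granted by Theorem \ref{incep}, and the identity is checked weakly on all of $L^2(\Xi)$.
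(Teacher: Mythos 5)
Your proof is correct and follows exactly the route the paper intends: the corollary is stated without proof as an immediate consequence of the orthogonality relations of Theorem \ref{incep}, obtained by unwinding the sesquilinear adjoint against the bilinear pairings, and your bookkeeping of the conjugations (in particular that $\langle\overline k,h\rangle=\langle h,\overline k\rangle$) is accurate.
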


The adjoint $\left(M^B_{h}\right)^*:L^2(\Xi\times\Xi)\rightarrow L^2(\Xi)$ is given explicitly by
\begin{equation}\label{adjunctu}
\left(M^B_{h}\right)^*(G):=\int_\Xi \int_\Xi dXdY\,G(X,Y)\,\e_X\#^B {h}\#^B \e_{Y-X}
=\int_\Xi \int_\Xi dXdY\,G(X,Y)\left(\Theta^B_{X,Y}\right)^{-1}(h),
\end{equation}

\noindent
{\bf Remark.}
The Theorem  suggests defining
\begin{equation}\label{ldoiul}
M^B:L^2(\Xi\times\Xi)\cong L^2(\Xi)\otimes L^2(\Xi)\rightarrow L^2(\Xi\times\Xi),\ \ \ \ \ M^B(f\otimes h):=M^B_h(f).
\end{equation}
One has $M^B_h=M^B\circ J_h$, where for any $h\in L^2(\Xi)$ we set
\begin{equation}\label{margay}
J_h:L^2(\Xi)\rightarrow L^2(\Xi\times\Xi),\ \ \ \ \ J_h(f):=f\otimes h.
\end{equation}
The adjoint is given by
\begin{equation}\label{marguy}
J_h^*:L^2(\Xi\times\Xi)\rightarrow L^2(\Xi),\ \ \ \ \ \[J_h^*(F)\](X):=\<F(X,\cdot),h(\cdot)\>,
\end{equation}
and it satisfies
\begin{equation}\label{ci}
J_k^*J_h=<h,k>{\rm id},\ \ \ \ \ J_hJ_k^*=1\otimes\left(|h><k|\right)=1\otimes{\rm Int}_{h\otimes k}.
\end{equation}
While $M^B$ is an isomorphism, $\parallel h\parallel^{-1}J_h$ is only an isometry with range $L^2(\Xi)\otimes\{h\}$.

\medskip
We turn now to the algebraic properties of the magnetic modulation mapping.
On functions $:\Xi\times\Xi\rightarrow\mathbb C$ we use {\it the crossed product composition}
\begin{equation}\label{crosu}
(F\diamond G)(X,Y):=\int_\Xi dZ\,F(X,Z)\,G(X-Z,Y-Z)
\end{equation}
and the involution $F^*(X,Y):=\overline{F(X-Y,-Y)}$. The "crossed product" feature can be seen if we write (\ref{crosu}) as
$$
(F\diamond G)(Y):=\int_\Xi dZ\,F(Z)\,\Theta_Z\left[G(Y-Z)\right].
$$
This is an equality between functions defined on $\Xi$, so it must be evaluated on $X\in\Xi$ by using notations as
$[F(Z)](X):=F(X,Z)$. The action of $\Xi$ on itself given by $\Theta_Z(X):=X+Z$ is transferred to functions by
$\Theta_Z(g):=g\circ\Theta_{-Z}$. For various function spaces on $\Xi$ one can define $C^*$-dynamical systems and they
generate naturally crossed product $C^*$-algebras of functions or distributions defined on $\Xi\times\Xi$. We refer to
\cite{W} for general information on this topic; we are going to study the connection of crossed products with magnetic
modulation spaces in a further publication.

For the moment we only notice that $L^2(\Xi\times\Xi)$ is a $^*$-algebra with the structure indicated above. To see this,
one might recall the kernel multiplication
$$
(K\tilde\diamond L)(X,Y):=\int_\Xi dZ\,K(X,Z)\,L(Z,Y)
$$
and the involution $K^{\tilde *}(X,Y):=\overline{K(Y,X)}$ and perform the change of variables $(X,Y)\mapsto(X,X-Y)$.

\begin{theorem}\label{fundamentala}
If $h\#^B h=h=\overline h\ne 0$, then $M^B_{h}:L^2(\Xi)\rightarrow L^2(\Xi\times\Xi)$ is an injective morphism of $^*$-algebras.
\end{theorem}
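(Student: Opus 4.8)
The plan is to establish the three assertions hidden in the statement: injectivity of $M^B_h$, that it carries $\#^B$ to the crossed product $\diamond$, and that it carries complex conjugation to the involution $F\mapsto F^*$ of $L^2(\Xi\times\Xi)$. Injectivity comes almost for free from Theorem \ref{incep}: taking $g=f$ and $k=h$ there gives $\|M^B_h(f)\|^2=\langle\overline h,h\rangle\,\|f\|^2=\|h\|^2\|f\|^2$, so $M^B_h$ equals $\|h\|$ times an isometry and, as $h\neq0$, is injective. For the two algebraic identities I would first note that $M^B_h$ is bounded and that $\#^B$ on $L^2(\Xi)$ and $\diamond$ on $L^2(\Xi\times\Xi)$ are bounded bilinear maps (each is a copy of composition of Hilbert-Schmidt operators); hence it suffices to prove the identities for $f,g\in\S(\Xi)$, where the manipulations below are legitimate, and then extend by continuity, exactly the approximation device already invoked for Lemma \ref{basica}(a) and Theorem \ref{incep}.

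For multiplicativity I would substitute the definitions into the crossed product (\ref{crosu}) and change variables $W=X-Z$:
$$
\bigl(M^B_h(f)\diamond M^B_h(g)\bigr)(X,Y)=\int_\Xi dW\,\langle\e_{-X}\#^B f\#^B\e_W,h\rangle\,\langle\e_{-W}\#^B g\#^B\e_{X-Y},h\rangle .
$$
By the cyclicity of the trilinear form $(a,b,c)\mapsto\langle a\#^B b,c\rangle$ provided by Lemma \ref{basica}(a), the first pairing equals $\langle h\#^B\e_{-X}\#^B f,\e_W\rangle$ and the second equals $\langle\e_{-W},g\#^B\e_{X-Y}\#^B h\rangle$, so the $W$-integral collapses, through the weak resolution of identity (\ref{pleasca}), to $\langle h\#^B\e_{-X}\#^B f\#^B g\#^B\e_{X-Y},h\rangle$. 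A last application of cyclicity pushes the leading $h$ past the block $\e_{-X}\#^B f\#^B g\#^B\e_{X-Y}$ into the second slot of the pairing, where the two copies of $h$ combine by the hypothesis $h\#^B h=h$; what remains is exactly $\langle\e_{-X}\#^B f\#^B g\#^B\e_{X-Y},h\rangle=[M^B_h(f\#^B g)](X,Y)$.

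For the involution I would compute directly from $F^*(X,Y)=\overline{F(X-Y,-Y)}$:
$$
[M^B_h(f)^*](X,Y)=\overline{\langle\e_{Y-X}\#^B f\#^B\e_X,h\rangle}=\langle\,\overline{\e_X}\#^B\overline f\#^B\overline{\e_{Y-X}}\,,\,\overline h\,\rangle ,
$$
where I used that conjugation is the involution of $(L^2(\Xi),\#^B)$ --- so it reverses the order of a $\#^B$-product --- and that $\langle\cdot,\cdot\rangle$ is real-bilinear. Since $\overline{\e_X}=\e_{-X}$, the right-hand side is $\langle\e_{-X}\#^B\overline f\#^B\e_{X-Y},\overline h\rangle$, and the hypothesis $\overline h=h$ identifies it with $[M^B_h(\overline f)](X,Y)$. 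Together with injectivity, this gives the theorem.

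The algebraic heart is the collapse of the $W$-integral: the resolution of identity (\ref{pleasca}) together with the idempotency of $h$ is what produces precisely the untwisted crossed product $\diamond$ out of the magnetic product $\#^B$. The only real obstacle I anticipate is analytic --- justifying the interchanges of integration and the weak identity for merely square-integrable $f,g,h$ --- which is why the clean route is to carry out the whole computation on $\S(\Xi)$ and pass to the limit; once that is in place, the algebra is entirely dictated by the two hypotheses, which say exactly that $h$ is a self-adjoint idempotent of the $^*$-algebra $(L^2(\Xi),\#^B)$.
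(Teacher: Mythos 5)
Your proof is correct and follows essentially the same route as the paper: both arguments substitute the definition into the crossed product, use the cyclicity of Lemma \ref{basica}(a) to reposition the windows, collapse the intermediate integral via the resolution of identity (\ref{pleasca}), and then invoke $h\,\#^B h=h$ and $\overline h=h$, with injectivity read off from Theorem \ref{incep}. The only (harmless) differences are that the paper proves the slightly more general two-window identity $M^B_{h}(f)\diamond M^B_{k}(g)=M^B_{k\,\#^B h}(f\#^B g)$ before setting $k=h$, while you specialize from the start and are somewhat more explicit about the density argument justifying the formal manipulations.
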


\begin{proof}
We are going to use (\ref{croco}) and (\ref{pleasca}) to show that
\begin{equation}\label{minunata}
M^B_{h}(f)\diamond M^B_{k}(g)=M^B_{k\,\#^B h}\(f\#^B g\),
\end{equation}
and then take $h=k$. One has
$$
\[M^B_{h}(f)\diamond M^B_{k}(g)\](X,Y)=\int_\Xi dZ\[M^B_{h}(f)\](X,Z)\[M^B_{k}(g)\](X-Z,Y-Z)=
$$
$$
=\int_\Xi dZ\<\e_{-X}\#^B f\#^B\e_{X-Z},h\>\<\e_{Z-X}\#^B g\#^B\e_{X-Y},k\>=
$$
$$
=\int_\Xi dZ\<h\#^B\e_{-X}\#^B f,\e_{X-Z}\>\<\e_{Z-X},g\#^B\e_{X-Y}\#^B k\>=
$$
$$
=\<h\#^B\e_{-X}\#^B f,g\#^B\e_{X-Y}\#^B k\>=
$$
$$
=\<\e_{-X}\#^B(f\#^B g)\#^B\e_{X-Y},k\#^B h\>=\[M^B_{k\,\#^B h}(f\#^B g)\](X,Y).
$$
For the involution:
$$
\[M^B_{h^*}(f^*)\](X,Y)=\<\e_{-X}\#^B\overline{f}\#^B\e_{X-Y},\overline{h}\>=
$$
$$
\overline{\<\e_{Y-X}\#^Bf\#^B\e_X,h\>}=\overline{\[M^B_{h}(f)\](X-Y,-Y)}=\[M^B_{h}(f)\]^*(X,Y).
$$
The injectivity follows from Theorem \ref{incep}.
\end{proof}

\noindent
{\bf Remark.}
One could also use the composition law
\begin{equation}\label{dublu}
\square^B:L^2(\Xi\times\Xi)\times L^2(\Xi\times\Xi)\rightarrow L^2(\Xi\times\Xi),\ \ \ \ \
(f\otimes h)\square^B(g\otimes k):=(f\#^B g)\otimes (k\#^B h)
\end{equation}
and the usual involution on $L^2(\Xi\times\Xi)$ given by complex conjugation.
Then
\begin{equation}\label{udatu}
M^B:\(L^2(\Xi\times\Xi),\square^B\)\rightarrow\(L^2(\Xi\times\Xi),\diamond\)
\end{equation}
is an isomorphism of $^*$-algebras.

\section{Connections with the Bargmann transform}\label{finalaa}

The magnetic analog of the Bagmann transform requires a suitable family of coherent states.
The main idea for introducing them will be to use the magnetic Weyl system (\ref{now})
to propagate a given state, corresponding to $X=0$, to a family of states indexed by the points $X$ of phase space.
In order to insure gauge-covariance, this state must have a good a priori dependence of the vector potential $A$.
Of course this seems to fit the group-theoretical strategy to generate coherent states, but we stress that (\ref{asa})
collapses to the definition of a projective representation only in the very simple case of a constant magnetic field.

We note that with a proper implementation of Planck's constant $\hb$, one proves (\cite{MP2}) convergence of the quantum
algebra of observables to the classical one in the sense of strict deformation quantization (cf. \cite{Ri1,La3}).
We intend to study in a future publication the dependence of coherent states and the associated
Berezin-Toeplitz operators on the Planck constant, in the framework of deformation quantization (cf. \cite{La1,La2,La3}).

Let us fix a unit vector, $v\in\mathcal H := L^2 (\X)$. For any
choice of a continuous vector potential $A$ generating the magnetic field $B$ and for any $Y\in\Xi$, we define
{\it the family of magnetic coherent vectors}
$$
v^A\equiv v^A(0):=e^{i\Gamma^A([0,Q])}v,\ \ \ \ \ v^A(Y):=\mathfrak{op}^A(-Y)v^A.
$$
Explicitly
\begin{equation}\label{mur}
 \left[v^A(Y)\right](x)=e^{i(x-\frac{y}{2})\cdot\eta}e^{-i\Gamma^A([x,x-y])}e^{i\Gamma^A([0,x-y])}v(x-y).
\end{equation}
Note that for the standard Gaussian $v(x)=\pi^{-N/4}e^{-x^2/2}$ and
for $A=0$, one gets the usual coherent states of Quantum Mechanics (see \cite{AAGM}).
It is easy to show that in weak sense
\begin{equation}\label{zic}
\int_{\Xi}\frac{dY}{(2\pi)^N}\,|v^A(Y)\rangle\langle v^A(Y)|=1.
\end{equation}
We are not going to prove this simple result, since it will not be needed in the sequel.

Since the pure state space of $\mathbb K(\H)$ can be identified with $\mathbb P(\H)$ (the family of all
self-adjoint one-dimensional projections
in $\H$) and considering the isomorphism $\mathfrak{Op}^A:\mathfrak A^B(\Xi)\rightarrow \mathbb K(\H)$,
we introduce families of coherent states on the two $C^*$-algebras:

\begin{definition}\label{enfin}
For any $Z\in\Xi$ we define
$$
\mathfrak v^A(Z):\mathbb K(\H)\rightarrow\C,\ \ \ \ \ \mathfrak v^B(Z):\mathfrak A^B(\Xi)\rightarrow\C
$$
by
$$
\left[\mathfrak v^A(Z)\right](S):=
{\rm Tr}\left(\left\vert v^A(Z)\right> \left< v^A(Z) \right\vert S\right)=
\left< v^A(Z),S\,v^A(Z)\right>,\ \ \ \ \ \forall S\in\mathbb K(\H)
$$
and
$$
\left[\mathfrak v^B(Z)\right](f):=\left[\mathfrak v^A(Z)\right]\left[\mathfrak{Op}^A(f)\right]=
\left< v^A(Z),\mathfrak{Op}^A(f)\,v^A(Z)\right>,\ \ \ \ \ \forall f\in\mathfrak A^B(\Xi).
$$
\end{definition}

One can write
$$
\left[\mathfrak v^B(Z)\right](f)=\left< v^A,\mathfrak{op}^A\left(Z\right)\Op(f)
\mathfrak{op}^A\left(-Z\right)v^A\right>=\left< v^A,
\Op\left(\e_{Z}\,\#^B f\,\#^B\e_{-Z}\right)v^A\right>.
$$
Using (\ref{tracas}) and setting $\mathfrak v^B:=\mathfrak v^B(0)$, one has
$\mathfrak v^B(Z)=\mathfrak v^B\circ\Theta^B_Z$.
The intrinsic notation $\mathfrak v^B(Z)$ is justified by a straightforward computation based on Stokes' Theorem, leading to

\begin{equation}\label{insfarsit}
\left[\mathfrak v^B(Z)\right](f)=(2\pi)^{-N}\int_\X\int_\X\int_{\X^*}dx\,dy\,d\xi\,
e^{i(x-y)\cdot(\xi-\zeta)}f\left(\frac{x+y}{2},\xi\right)\cdot
\end{equation}
$$
\cdot\exp\left\{i\left[\Gamma^B(<y,x,x-z>)+\Gamma^B(<y,x-z,0>)+\Gamma^B(<y,0,y-z>)
\right]\right\}\overline{v(x-z)}v(y-z).
$$

A convenient setting is obtained after making a unitary transformation, generalizing the classical Bargmann
transformation; the associated Bargmann-type space is a Hilbert
space with reproducing kernel.

\begin{definition}
(a) The mapping $\mathcal U^A_v : L^2(\X)\rightarrow L^2 \left(\Xi;\frac{dX}{(2\pi)^N}\right)$,
\begin{equation}\label{barr}
\left(\mathcal U^A_v u\right)(X):=\left< v^A (X), u\right>=\left<v,\mathfrak{op}^A(X)u\right>
\end{equation}
is called {\rm the Bargmann transformation} corresponding to the family of coherent states $(v^A (X))_{X\in\Xi}$.

(b) The subspace $\mathcal K^A_v :=\mathcal U^A_v L^2 (\X)\subset L^2(\Xi)$
is called {\rm the Bargmann space} corresponding to the family of coherent states $(v^A(X))_{X\in \Xi}$.
\end{definition}

The proofs of the statements bellow are straightforward and not specific to our magnetic framework (see \cite{La3}, section
II.1.5 for instance):

\begin{proposition}\label{nespec}
(a) $\mathcal U^A_v$ is an isometry with adjoint
$$
\left(\mathcal U^A_v\right)^* : L^2(\Xi)\rightarrow L^2(\X),\ \ \left(\mathcal U^A_v\right)^* \Phi
:=\int_{\Xi}\frac{dX}{(2\pi)^N}\Phi (X) v^A(X)
$$
and final projection $P^A_v:=\mathcal U^A_v\left(\mathcal U^A_v\right)^*\in\mathbb P [L^2(\Xi)]$, with $P^A_v
L^2(\Xi)=\mathcal K^A_v$.

(b) The kernel of this projection
$$
K^A_v:\Xi\times\Xi\rightarrow\mathbb C,\ \ K^A_v(X,Y):=\left< v^A(X),  v^A(Y)\right>
$$
is a continuous function and it is a reproducing kernel for
$\mathcal K^A_v$:
$$
\Phi (X)=\int_{\Xi}\frac{dX}{(2\pi)^N} K^A_v(X,Y)\Phi (Y),\
\ \forall\,X\in\Xi,\ \ \forall\,\Phi\in\mathcal K^A_v.
$$
(c) The Bargmann space is composed of continuous functions and contains all the vector $K^A_v(X,\cdot),\,X\in\Xi$. The
evaluation maps $\mathcal K^A_v\ni\Phi\rightarrow\Phi(X)\in\mathbb C$ are all continuous.
\end{proposition}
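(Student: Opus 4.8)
The plan is to reduce everything to two ingredients: the resolution of the identity (\ref{zic}), and the norm-continuity of the coherent vector map $\Xi\ni X\mapsto v^A(X)\in\H$. The latter is read off directly from the explicit expression (\ref{mur}) (dominated convergence), or from strong continuity of the magnetic Weyl system $\mathfrak{op}^A(\cdot)$ together with continuity of $\Gamma^A$; moreover $\|v^A(X)\|=\|v\|=1$ for every $X$, since $v^A$ differs from $v$ by multiplication with a unimodular function and $\mathfrak{op}^A(-X)$ is unitary. Granting these, part (a) is immediate. Isometry is exactly (\ref{zic}) tested on the pair $(u,u)$: $\int_\Xi\frac{dX}{(2\pi)^N}|\langle v^A(X),u\rangle|^2=\langle u,u\rangle$. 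The adjoint formula follows from the one-line identity
$$\langle\mathcal U^A_v u,\Phi\rangle_{L^2(\Xi,\,dX/(2\pi)^N)}=\int_\Xi\frac{dX}{(2\pi)^N}\,\Phi(X)\,\langle u,v^A(X)\rangle=\Big\langle u,\int_\Xi\frac{dX}{(2\pi)^N}\,\Phi(X)\,v^A(X)\Big\rangle,$$
the last integral being understood weakly; since $\|(\mathcal U^A_v)^*\|\le1$ one may first take $\Phi$ in a dense subspace (say compactly supported continuous functions) and pass to the limit. Finally $\mathcal U^A_v$ being isometric gives $(\mathcal U^A_v)^*\mathcal U^A_v=\mathrm{id}$, hence its range $\mathcal K^A_v$ is closed and $P^A_v=\mathcal U^A_v(\mathcal U^A_v)^*$ is the orthogonal projection of $L^2(\Xi)$ onto $\mathcal K^A_v$.

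For part (b), continuity of $K^A_v$ is the joint continuity of $(X,Y)\mapsto\langle v^A(X),v^A(Y)\rangle$, which holds because both arguments are norm-continuous and uniformly bounded (by $1$). For the reproducing property, take $\Phi=\mathcal U^A_v u\in\mathcal K^A_v$ and compute, using (\ref{zic}),
$$\int_\Xi\frac{dY}{(2\pi)^N}\,K^A_v(X,Y)\,\Phi(Y)=\int_\Xi\frac{dY}{(2\pi)^N}\,\langle v^A(X),v^A(Y)\rangle\,\langle v^A(Y),u\rangle=\langle v^A(X),u\rangle=\Phi(X);$$
equivalently this says $P^A_v\Phi=\Phi$. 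Moreover, directly from the definition (\ref{barr}), $\mathcal U^A_v v^A(Y_0)$ is the function $X\mapsto\langle v^A(X),v^A(Y_0)\rangle=K^A_v(X,Y_0)$, so each kernel section $K^A_v(\,\cdot\,,Y_0)$ lies in $\mathcal K^A_v$; this already covers one of the assertions of (c).

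For part (c): every $\Phi=\mathcal U^A_v u$ is continuous because $\Phi(X)=\langle v^A(X),u\rangle$ and $X\mapsto v^A(X)$ is continuous, and it is bounded, $|\Phi(X)|\le\|v^A(X)\|\,\|u\|=\|\Phi\|$ (using that $\mathcal U^A_v$ is isometric). The same inequality shows that the evaluation functional $\mathcal K^A_v\ni\Phi\mapsto\Phi(X)\in\C$ has norm $\le1$, hence is continuous — alternatively it is represented, via the reproducing kernel, by the vector $K^A_v(\,\cdot\,,X)\in\mathcal K^A_v$. The containment of the kernel sections in $\mathcal K^A_v$ was already noted above.

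I do not expect a genuine obstacle: the statement is standard coherent-state / reproducing-kernel bookkeeping, insensitive to the magnetic structure, and everything rests on (\ref{zic}) and the elementary continuity of $v^A(\cdot)$. The only places deserving a word of care are the weak interpretation of the Bochner-type integral defining $(\mathcal U^A_v)^*$ and the Fubini interchange used in the reproducing identity; both are legitimate because, after pairing with any fixed vector of $\H$, the relevant integrands are genuinely integrable, owing to $\|v^A(X)\|\equiv1$ and (\ref{zic}).
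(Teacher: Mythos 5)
Your proof is correct and is precisely the standard coherent-state/reproducing-kernel argument the paper has in mind: the authors give no proof at all, stating only that the claims are ``straightforward and not specific to our magnetic framework'' and citing Landsman's book, Section II.1.5. The only point worth flagging is that your argument rests entirely on the resolution of the identity (\ref{zic}), which the paper asserts without proof and even says ``will not be needed in the sequel''; since the isometry of $\mathcal U^A_v$ is literally the polarized form of (\ref{zic}), a self-contained write-up should include the short Plancherel computation that derives (\ref{zic}) from the explicit formula (\ref{mur}) (integrate first in $\eta$, then in $y$), after which everything else in your argument goes through as stated.
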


For various types of vectors $u,v:\Xi\rightarrow \mathbb C$ we define {\it the magnetic Wigner transform} $V^A_{u,v}$ by
\begin{equation}\label{ti}
\langle u,\Op(f)v\rangle=:\int_\Xi dX\,f(X)V^A_{u,v}(X).
\end{equation}
One gets easily
$V^A_{v,u}=(2\pi)^{N/2}\,[(1\otimes\mathcal F)\circ C]\,[\overline\gamma^A\cdot(u\otimes\overline{v})]$,
with $\gamma^A(a,b):=e^{-i\Gamma^A([a,b])}$, which may also be written
\begin{equation}\label{wigner}
V^A_{v,u}(z,\zeta)=\int_\X dy\,e^{i\,y\cdot\zeta}\,\overline\gamma^A
\left(z+\frac{y}{2},\,z-\frac{y}{2}\right)u \left(z+\frac{y}{2}\right)\overline{v}\left(z-\frac{y}{2}\right).
\end{equation}
It follows that $u,v\in L^2(\X)\ \Longrightarrow\ V^A_{u,v}\in L^2(\Xi)$.
By a direct computation one gets $|u><v|=\Op\(V^A_{u,v}\)$, which describes all the rank-one operators in $\H$.
This suggests studying the modulation mapping $M^B_{h}$ in the particular case in which
$h$ is the Wigner transform $h(B,v):=V^A_{v^A,v^A}$, given explicitly by
\begin{equation}\label{formol}
\[h(B,v)\](z,\zeta)=\int_\X dy\,e^{i\,y\cdot\zeta}\,\exp\[i\Gamma^B
\left(\<0,z+\frac{y}{2},\,z-\frac{y}{2}\>\right)\]v\left(z+\frac{y}{2}\right)\overline{v\left(z-\frac{y}{2}\right)}.
\end{equation}

Let us also set $\mathbb U^A_v[T]:=\mathcal U^A_vT\left(\mathcal U^A_v\right)^*$. We denote by
$REP$ the Schr\"odinger representation of $L^2(\Xi\times\Xi)$ in $L^2(\Xi)$ given by
$$
[REP(F)\Phi](X):=\int_\Xi dY\,F(X,X-Y)\Phi(Y).
$$

\begin{proposition}\label{ciuci}
One has
\begin{equation}\label{ciucu}
REP\circ M_{h(B,v)}^B=\mathbb U^A_v\circ\mathfrak{Op}^A.
\end{equation}

\begin{proof}
For any $f\in L^2(\Xi)$ we have
$$
\mathcal U^A_v\mathfrak{Op}^A(f)\left(\mathcal U^A_v\right)^*\Phi=
\< v^A(X),\mathfrak{Op}^A(f)\left(\mathcal U^A_v\right)^*\Phi\>=
\< v^A(X),\mathfrak{Op}^A(f)\int_\Xi\frac{dY}{(2\pi)^N}\Phi(Y)v^A(Y)\>=
$$
$$
=\int_\Xi\frac{dY}{(2\pi)^N}\Phi(Y)\< v^A(X),\mathfrak{Op}^A(f)v^A(Y)\>=
\int_\Xi\frac{dY}{(2\pi)^N}\Phi(Y)\< v^A,\mathfrak{Op}^A\(\e_{-X}\#^B f\#^B\e_Y\)v^A\>=
$$
$$
=\int_\Xi\frac{dY}{(2\pi)^N}\[M^B_{h(B,v)}(f)\](X,X-Y)\Phi(Y)=\(\[\right(REP\circ M_{h(B,v)}^B\left)(f)\](\Phi)\)(X).
$$
\end{proof}
\end{proposition}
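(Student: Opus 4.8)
The plan is to verify the operator identity $REP\circ M_{h(B,v)}^B=\mathbb U^A_v\circ\mathfrak{Op}^A$ by testing it on an arbitrary symbol $f\in L^2(\Xi)$ and an arbitrary vector $\Phi\in L^2(\Xi)$, and chasing both sides down to the same explicit integral. The left-hand side, applied to $f$, produces a function in $L^2(\Xi\times\Xi)$; composing with $REP$ and evaluating on $\Phi$ at a point $X$ gives, by the very definition of $REP$, the integral $\int_\Xi dY\,\big[M^B_{h(B,v)}(f)\big](X,X-Y)\,\Phi(Y)$ (up to the $(2\pi)^{-N}$ normalization, which is absorbed by the measure on the Bargmann side). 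The right-hand side, $\mathbb U^A_v[\mathfrak{Op}^A(f)]=\mathcal U^A_v\,\mathfrak{Op}^A(f)\,(\mathcal U^A_v)^*$, is evaluated on $\Phi$ by first applying $(\mathcal U^A_v)^*$ using the explicit adjoint formula from Proposition \ref{nespec}(a), namely $(\mathcal U^A_v)^*\Phi=\int_\Xi\frac{dY}{(2\pi)^N}\Phi(Y)v^A(Y)$, then applying $\mathfrak{Op}^A(f)$, then pairing against $v^A(X)$ via the defining formula (\ref{barr}) for $\mathcal U^A_v$.

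The key computational step is then to recognize that the matrix element $\langle v^A(X),\mathfrak{Op}^A(f)\,v^A(Y)\rangle$ equals $\big[M^B_{h(B,v)}(f)\big](X,X-Y)$. First I would rewrite $v^A(X)=\mathfrak{op}^A(-X)v^A$ and $v^A(Y)=\mathfrak{op}^A(-Y)v^A$, so that the matrix element becomes $\langle v^A,\mathfrak{op}^A(X)\,\mathfrak{Op}^A(f)\,\mathfrak{op}^A(-Y)v^A\rangle$. Using the intertwining relation $\mathfrak{op}^A(X)\,\mathfrak{Op}^A(f)\,\mathfrak{op}^A(-Y)=\mathfrak{Op}^A(\e_{X}\#^B f\#^B\e_{-Y})$ — which follows from $\mathfrak{op}^A(X)=\mathfrak{Op}^A(\e_X)$ together with the homomorphism property of $\mathfrak{Op}^A$ with respect to $\#^B$, exactly the same manipulation used in the paragraph preceding (\ref{insfarsit}) — this becomes $\langle v^A,\mathfrak{Op}^A(\e_{X}\#^B f\#^B\e_{-Y})v^A\rangle$. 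Now one applies the definition of the Wigner transform (\ref{ti}): $\langle v^A,\mathfrak{Op}^A(g)v^A\rangle=\int_\Xi dZ\,g(Z)\,V^A_{v^A,v^A}(Z)=\langle g,h(B,v)\rangle$ with $g=\e_{X}\#^B f\#^B\e_{-Y}$. Finally, reconciling indices: $\langle\e_{X}\#^B f\#^B\e_{-Y},h(B,v)\rangle$ should be matched with $\big[M^B_{h(B,v)}(f)\big](X,X-Y)=\langle\e_{-X}\#^B f\#^B\e_{X-(X-Y)},h(B,v)\rangle=\langle\e_{-X}\#^B f\#^B\e_{Y},h(B,v)\rangle$ — a sign discrepancy that I expect to be resolved by a harmless relabeling, since in (\ref{ciucu}) what is actually substituted is evaluation at $(X,X-Y)$ and the pairing $\langle v^A(X),\cdot\,v^A(Y)\rangle$ carries $\e_{-X}$ on the left via $v^A(X)=\mathfrak{op}^A(-X)v^A$; I would be careful to track whether it is $\mathfrak{op}^A(X)$ or $\mathfrak{op}^A(-X)$ that appears, consulting Definition \ref{enfin} and the display just below it where exactly this computation is carried out and gives $\e_{Z}\#^B f\#^B\e_{-Z}$ for the diagonal case.

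Assembling the pieces: substituting the identity $\langle v^A(X),\mathfrak{Op}^A(f)\,v^A(Y)\rangle=\big[M^B_{h(B,v)}(f)\big](X,X-Y)$ back into the expansion of $\mathbb U^A_v[\mathfrak{Op}^A(f)]\Phi$ at the point $X$ yields precisely $\int_\Xi\frac{dY}{(2\pi)^N}\big[M^B_{h(B,v)}(f)\big](X,X-Y)\Phi(Y)$, which is $\big(REP(M^B_{h(B,v)}(f))\Phi\big)(X)$ by the definition of $REP$. Since $f$ and $\Phi$ were arbitrary (in $L^2$, a dense and convenient class, with both sides bounded thanks to the isometry property of $M^B_{h(B,v)}/\parallel h(B,v)\parallel$ from Theorem \ref{incep} and the unitarity of $\mathcal U^A_v$ onto its range from Proposition \ref{nespec}), the operator identity follows. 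The main obstacle is bookkeeping: getting every $\e_{\pm X}$, $\e_{\pm Y}$, $\mathfrak{op}^A(\pm X)$ and the argument slot $(X,Y)$ versus $(X,X-Y)$ consistent, so that the sign conventions in Definition \ref{justify}, in the coherent state definition $v^A(Y):=\mathfrak{op}^A(-Y)v^A$, and in the definition of $REP$ all align — there is genuinely no deep difficulty beyond this, since every analytic ingredient (homomorphism property of $\mathfrak{Op}^A$, the Wigner/Weyl duality (\ref{ti}), the adjoint formula for $\mathcal U^A_v$) is already in hand.
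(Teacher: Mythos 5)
Your proposal is correct and follows essentially the same route as the paper: expand $\mathbb U^A_v[\Op(f)]\Phi$ via the adjoint formula from Proposition \ref{nespec}(a), reduce to the matrix element $\langle v^A(X),\Op(f)v^A(Y)\rangle$, identify it with $\left[M^B_{h(B,v)}(f)\right](X,X-Y)$ through the covariance relation $\mathfrak{op}^A(X)=\Op(\e_X)$ and the Wigner-transform duality (\ref{ti}), and finish with the definition of $REP$. The sign discrepancy you flag is real but harmless: the paper's proof writes $\e_{-X}\#^Bf\#^B\e_Y$ while its own display after Definition \ref{enfin} yields $\e_Z\#^Bf\#^B\e_{-Z}$ on the diagonal, so your insistence on tracking $\mathfrak{op}^A(\pm X)$ carefully is, if anything, more scrupulous than the original.
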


\noindent
{\bf Remark.} We can further compose $M^B_{h(B,v)}$ with a partial Fourier transformation $1\otimes\mathfrak F:
L^2(\Xi\times\Xi)\rightarrow L^2(\Xi\times\Xi)$. Since $OP:=REP\circ(1\otimes\mathfrak F)$ is essentially the Kohn-Nirenberg
pseudodifferential calculus in $\mathbb R^{2N}$, one gets a nice interpretation for the operator $\Op(f)$: its magnetic
Bargmann transform associated to $(A,v)$ is a usual pseudodifferential operator with symbol
$(1\otimes\mathfrak F)\circ M^B_{h(B,v)}$.
This can also be considered a nice interpretation of the magnetic modulation mapping.


\end{document}